\documentclass[12pt]{amsart}
 
\usepackage[T1]{fontenc}
\usepackage{lmodern}          
\usepackage[utf8]{inputenc}   
\usepackage[english]{babel}
 
\usepackage{amssymb}
\usepackage{mathtools}
 
\usepackage[margin=2.5cm]{geometry}
\usepackage{enumitem}
\usepackage{booktabs}         
\usepackage{graphicx}
\usepackage{subfig}
\usepackage{xcolor}
\usepackage{comment}
\usepackage{upquote}
 
\usepackage{amsrefs}
 
\theoremstyle{plain}
\newtheorem{theorem}{Theorem}[section]

\theoremstyle{definition}
\newtheorem{definition}[theorem]{Definition}
\newtheorem{example}[theorem]{Example}
 
\theoremstyle{remark}
\newtheorem{remark}[theorem]{Remark}
 
\numberwithin{equation}{section}
 
\definecolor{linkblue}{rgb}{0.05,0.20,0.55}
\definecolor{citegreen}{rgb}{0.00,0.40,0.15}
\definecolor{urlmagenta}{rgb}{0.50,0.10,0.40}
 
\usepackage{url}
\usepackage{hyperref}
\hypersetup{
    colorlinks=true,
    linkcolor=linkblue,
    citecolor=citegreen,
    urlcolor=urlmagenta,
    pdftitle={Minimal examples of Cohen-Macaulay independence complexes
              that fail to be shellable or vertex decomposable},
    pdfauthor={Adam Van Tuyl}
}

\numberwithin{equation}{section}
\title[Minimal Examples]{Minimal examples of Cohen-Macaulay independence
complexes that fail to be shellable or vertex decomposable}

\author[A. Van Tuyl]{Adam Van Tuyl}
\address[A. Van Tuyl]
{Department of Mathematics and Statistics
McMaster University, Hamilton, ON L8S 4L8, Canada}
\email{vantuyla@mcmaster.ca}

\dedicatory{To Rafael H. Villarreal on the occasion of his 75th birthday}

\begin{document}

\begin{abstract}
We report on an exhaustive census of all 
1,006,700,565 connected graphs 
on $11$ vertices to find the minimal examples of 
graphs 
whose independence complexes are Cohen-Macaulay
but not shellable, or shellable but not vertex decomposable. 
In particular, there are exactly two graphs $G$ where ${\rm Ind}(G)$ is 
Cohen-Macaulay but not shellable, when the characteristic is not two, and exactly six graphs where ${\rm Ind}(G)$ is shellable but not vertex decomposable.
\end{abstract}

\keywords{independence complex, edge ideal, Cohen-Macaulay,
shellable, vertex decomposable} 
\subjclass[2020]{13F55, 05E45, 05C30, 13C14}
\date{\today}

\maketitle

\section{Introduction}
One theme in the study of combinatorial commutative algebra
is to  determine when the independence complex ${\rm Ind}(G)$ 
of a graph $G$ (sometimes called a flag complex) has additional 
topological or combinatorial structure, such as being
{\it Cohen-Macaulay}, {\it shellable}, or 
{\it vertex decomposable} (see 
Section 2 for definitions).  Indeed, Villarreal's paper \cite{V1990},
one of the founding papers in the field,
described conditions on $G$ that force ${\rm Ind}(G)$ to 
be Cohen-Macaulay.
For these three properties,  the following implications always hold:
\begin{equation}\label{eq:implications}
{\rm Ind}(G)~\text{is vertex decomposable}
\Rightarrow {\rm Ind}(G)~\text{is shellable} 
\Rightarrow {\rm Ind}(G)~\text{is Cohen-Macaulay}.
\tag{$\star$}
\end{equation}
In fact, these implications hold for any simplicial complex $\Delta$,
not just independence complexes.  While being vertex 
decomposable and shellable
are combinatorial properties, the Cohen-Macaulay property is defined in terms
of reduced simplicial homology over a field $\mathbb{K}$, or equivalently,
the length of the longest regular sequence in the associated Stanley-Reisner ring. Thus Cohen-Macaulayness also
depends upon the characteristic of $\mathbb{K}$.
If one adds additional hypotheses
on $G$, it can be shown that the converse implications of \eqref{eq:implications} also hold, e.g.,
if $G$ is a bipartite graph \cite{VT2009}, a chordal
graph \cites{DE2009,HHZ2006,W2009}, a well-covered graph with girth
$\geq 5$ \cite{BC2014},  or a very well-covered graph \cite{Metal2011}, among
other families.  

In general, the reverse implications of \eqref{eq:implications} fail.
Moriyama--Takeuchi \cite{MT2003} gave minimal examples of simplicial complexes
on six vertices that are shellable, but not vertex decomposable. However,
these complexes are not the independence complex of any graph;
to the best of our knowledge, the first published example
of a graph $G$ where ${\rm Ind}(G)$ is shellable but not vertex decomposable
is a circulant graph on $16$ vertices found by Earl--Vander Meulen--Van Tuyl \cite{EVMVT2016}.  Using a construction of Moradi--Khosh-Ahang \cite{MKA2016},
Vander Meulen--Van Tuyl \cite{VMVT2017}  leveraged this circulant graph to show that for all $n \geq 16$, there is
a graph on $n$ vertices where the first reverse implication fails. On the 
other hand, Baker--Vander Meulen--Van Tuyl \cite{BVMVT2018} showed via
a computer search that
for all graphs on $n \leq 10$ vertices, 
the implications
of \eqref{eq:implications} can be reversed. 

This leaves the tantalizing question of what are the minimal graphs
for  which \eqref{eq:implications} fails, which must necessarily have
at least 11 vertices. We report on an exhaustive computer
search to fill this gap. 
The following theorem summarizes the findings of our computer
search. 

\begin{theorem}\label{thm:maintheorem}
We have the following classification:
\begin{enumerate}
    \item The six graphs $H_1,H_2,\ldots,H_6$ defined in Section \ref{sec:examples} are the only graphs on 11 vertices with
    ${\rm Ind}(G)$ shellable but not vertex decomposable.
    \item If ${\rm char}(\mathbb{K}) \neq 2$, then the two graphs $G_1$ and $G_2$
    defined in Section \ref{sec:examples} are the only graphs
    on 11 vertices with ${\rm Ind}(G)$ Cohen-Macaulay but not shellable.
    \item If ${\rm char}(\mathbb{K}) =2$, then for all graphs on 11
    vertices, ${\rm Ind}(G)$ is Cohen-Macaulay
    if and only if ${\rm Ind}(G)$ is shellable.
\end{enumerate}
\end{theorem}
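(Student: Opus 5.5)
The proof is a computer census, organized so that expensive tests run on as few graphs as possible. First we reduce to connected graphs. If $G = G' \sqcup G''$, then ${\rm Ind}(G)$ is the join ${\rm Ind}(G') * {\rm Ind}(G'')$. Cohen-Macaulayness, shellability and vertex decomposability each hold for a join if and only if they hold for both factors. Both factors have at most $10$ vertices, so by the result of Baker--Vander Meulen--Van Tuyl \cite{BVMVT2018} the three properties coincide for them. Hence a disconnected graph on 11 vertices cannot be a counterexample, and we enumerate the 1,006,700,565 connected graphs on 11 vertices, for instance with \texttt{geng}.

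Next we apply cheap necessary conditions in increasing order of cost. Cohen-Macaulay complexes are pure, so ${\rm Ind}(G)$ must be pure, i.e.\ $G$ must be well-covered; this is quick to test via maximal independent sets and discards almost every graph. Further filters come from the fact that all links of a Cohen-Macaulay complex are Cohen-Macaulay. The link of a vertex $v$ is ${\rm Ind}(G \setminus N[v])$, a graph on at most $10$ vertices. By \cite{BVMVT2018} we may therefore test that link for vertex decomposability, or look it up in the 10-vertex tables. We can also require the $h$-vector to be nonnegative.

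For the survivors we compute reduced homology of ${\rm Ind}(G)$ and of every link over $\mathbb{Q}$ and over $\mathbb{F}_2$, and apply Reisner's criterion. Since Cohen-Macaulayness over $\mathbb{Q}$ is equivalent to that over every field of characteristic $\neq 2$ exactly when no torsion other than $2$-torsion appears, we compute integral homology to cover all characteristics at once. On the Cohen-Macaulay graphs we then test vertex decomposability by the recursive definition. A shedding vertex $v$ must have ${\rm Ind}(G\setminus v)$ and ${\rm Ind}(G\setminus N[v])$ vertex decomposable, and both are graphs on at most $10$ vertices. In a flag complex we can use the graph criterion: $v$ is a shedding vertex if, for every independent set $S$ of $G\setminus N[v]$, some $w\in N(v)$ has $S \cup \{w\}$ independent. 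This makes the test essentially a lookup, and its result determines which graphs are CM but not VD.

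For each remaining graph (the candidates $G_1,G_2,H_1,\dots,H_6$) we settle shellability directly. We either exhibit a shelling order, found by a backtracking search on facets, or prove that none exists by exhausting that search, possibly pruned with known obstructions. Part (1) is then the list of CM, non-VD graphs that admit a shelling. Part (2) lists those that do not admit one, and for these we confirm Cohen-Macaulayness in characteristic $\neq 2$. Part (3) follows by checking that $G_1$ and $G_2$ have $2$-torsion, which makes them fail to be Cohen-Macaulay over $\mathbb{F}_2$, so that every characteristic-$2$ Cohen-Macaulay graph is shellable. The main obstacle is the non-shellability proof for $G_1,G_2$ together with the sheer size of the census. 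Backtracking over facet orders can blow up, so I would first try the search with strong pruning, requiring each new facet to meet the previous ones in a pure codimension-one complex. As a cross-check, I would record the homology of links that must vanish, to confirm the char-dependence.
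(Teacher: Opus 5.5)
Your proposal is correct. Its overall pipeline is the census the paper actually runs: enumerate connected graphs, filter by purity and other cheap necessary conditions, test Cohen--Macaulayness over $\mathbb{Q}$, then vertex decomposability, and decide shellability only for the few non-VD survivors. The differences lie in how the exceptional graphs are certified.

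For $G_1,G_2$ the paper never searches for shelling orders. It cites Katzman's computation that ${\rm pd}(R/I(G_i))$ is $8$ when ${\rm char}(\mathbb{K})\neq 2$ and $9$ when ${\rm char}(\mathbb{K})=2$. Auslander--Buchsbaum turns this into Cohen--Macaulayness exactly when ${\rm char}(\mathbb{K})\neq 2$. Non-shellability is then immediate, because a shellable complex is Cohen--Macaulay over every field (Theorem~\ref{thm:mainproperties}(2)). You already plan to exhibit the $2$-torsion in $\tilde H_1({\rm Ind}(G_i);\mathbb{Z})$ for part (3), and that one fact also settles non-shellability. So the exhaustive backtracking you single out as the main obstacle is valid but unnecessary. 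Your integral-homology check via Reisner's criterion is a self-contained replacement for the citation of Katzman.

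Your join reduction for disconnected graphs is a real addition. The paper's census covers only connected graphs, while the theorem speaks of all graphs on $11$ vertices. Your argument closes exactly that: the three properties pass to and from joins, and each factor has at most $10$ vertices.

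One point to make explicit: the equivalences of \cite{BVMVT2018} are for Cohen--Macaulayness over $\mathbb{Q}$. You use them inside your link filter and in the characteristic-$2$ case of your join reduction. That step needs the universal-coefficient fact the paper states: Cohen--Macaulay over some field implies Cohen--Macaulay over $\mathbb{Q}$.
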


We have organized this short note as follows. Section \ref{sec:background}
recalls the relevant definitions, Section \ref{sec:examples} provides
the main examples plus some additional commentary on our examples, and Section \ref{sec:computations}  summarizes 
our computer search.

\section{Background}\label{sec:background}
A {\it simplicial complex} $\Delta$ on a set of vertices $X = \{1,\ldots,n\}$ is a
subset of the power set of $X$ such that: (1) if $F \in \Delta$ and 
$H \subseteq F$, then $H \in \Delta$; and (2) $\{i\} \in \Delta$ for all $i \in X$.  Note that $\emptyset \in \Delta$. An element $F \in \Delta$ is called a {\it face} and the maximal
faces are called the {\it facets}.  If $F_1,\ldots,F_s$ is a complete
list of facets of $\Delta$, then we write $\Delta = \langle F_1,\ldots,F_s\rangle$.  

The {\it dimension} of a face $F \in \Delta$ 
is given by $\dim(F) = |F|-1$; we then define the {\it dimension of $\Delta$}
to be $\dim(\Delta) = \max\{\dim(F) ~|~ F \in \Delta\}$.  A simplicial
complex $\Delta$ is {\it pure} if $\dim(F) = \dim(\Delta)$ for all
of its facets.   If $\dim(\Delta) =d$, then the {\it $f$-vector} of
$\Delta$ is the $(d+2)$-tuple $f(\Delta)= (f_{-1},f_0,\ldots,f_d)$ where
$f_i = |\{F \in \Delta ~|~ \dim(F) = i\}|$. We use the convention that
$\dim(\emptyset) = -1$, and thus $f_{-1} = 1$.  For $\ell = 0,\ldots,d+1$,
set
$h_\ell= \sum_{i=0}^\ell (-1)^{\ell-i}\binom{d+1-i}{\ell-i} f_{i-1}$.
The tuple $h(\Delta)=(h_0,\ldots,h_{d+1})$ is the {\it $h$-vector} of
$\Delta$.

For a simplicial complex $\Delta$ on $X$ and a fixed $i \in X$, the {\it link} of $\{i\}$ in $\Delta$
is the subcomplex
${\rm link}_\Delta(i)   =  \{F \in \Delta ~|~
    F \cup \{i\} \in \Delta,~ F \cap \{i\} = \emptyset\},$
and the {\it deletion} is the simplicial complex
${\rm del}_\Delta(i)  =  \{F \setminus \{i\} ~|~
    F \in \Delta\}$.
    The deletion and link are simplicial complexes
    on $X \setminus \{i\}$.

For a fixed field $\mathbb{K}$, the {\it Stanley-Reisner ideal}
of $\Delta$ on $X = \{1,\ldots,n\}$ is the monomial ideal
$$I_\Delta = \langle x_{i_1}\cdots x_{i_s} ~|~ \{i_1,\ldots,i_s\} \not\in \Delta \rangle \subseteq R= \mathbb{K}[x_1,\ldots,x_n].$$
The quotient ring $R/I_\Delta$ is the {\it Stanley-Reisner ring}
of $\Delta$.
It is known (e.g. \cite[Corollary 6.3.5]{VBook}) that $K$-$\dim(R/I_\Delta) = \dim(\Delta)+1$, 
where $K$-$\dim(R/I_\Delta)$ denotes the Krull dimension 
of $R/I_\Delta$.

We now define the special families of simplicial complexes studied
in this paper.

\begin{definition}  
Let $\Delta$ be a pure simplicial complex on $X$ with
$\dim(\Delta) = d$.
\begin{enumerate}
    \item Then $\Delta$ is {\it vertex decomposable} if either: $(a)$
    $\Delta = \langle F \rangle$ has a single facet, equivalently,
    $\Delta$ is a {\it simplex}, or $(b)$ there exists a vertex $i \in X$
    such that ${\rm link}_\Delta(i)$ and ${\rm del}_\Delta(i)$ are vertex
    decomposable.
    \item Then $\Delta$ is {\it shellable} if there exists an ordering
    $F_1 \prec F_2 \prec \cdots \prec F_s$ of the facets of $\Delta$
    such that for all $1 \leq j < i \leq s$, there exists some $a \in F_i \setminus F_j$ and some $k \in \{1,\ldots,i-1\}$ such that 
    $\{a\} = F_i\setminus F_k$.
    \item Then $\Delta$ is {\it Cohen-Macaulay} if the longest 
    regular sequence in $R/I_\Delta$ has length $d+1$, that is, there
    exist $f_1,\ldots,f_{d+1} \in R$ such that $\overline{f}_1$ is a 
    non-zero divisor
    of $R/I_\Delta$, and for $j=2,\ldots,d+1$, the element 
    $\overline{f}_j$ is a 
    non-zero divisor of $R/(I_\Delta + \langle f_1,\ldots,f_{j-1} \rangle)$,
    where $\overline{f}_k$ denotes the appropriate equivalence class.
\end{enumerate}
\end{definition}

\begin{remark}
    Vertex decomposability was introduced by Provan--Billera \cite{PB1980}.
       The length of the longest regular sequence of a ring is its {\it depth},
    so the Cohen-Macaulay definition is the requirement that
    the depth of $R/I_\Delta$ equal its Krull dimension. 
    An equivalent definition for Cohen-Macaulay simplicial complexes is
    to use Reisner's criterion in terms of the reduced simplicial homology of
    $\Delta$ over a field $\mathbb{K}$ (see \cite[Theorem 5.3.5]{VBook}).  While
    vertex decomposability and shellability only depend on the combinatorics
    of $\Delta$, the Cohen-Macaulay property of $\Delta$ also depends upon
    the characteristic of $\mathbb{K}$.
\end{remark}

We have the following properties, which also justify the implications \eqref{eq:implications} in the introduction.

\begin{theorem}\label{thm:mainproperties}
Let $\Delta$ be a pure simplicial complex on $X$ with 
$\dim(\Delta) =d$.
\begin{enumerate}
    \item \cite[Corollary 2.9]{PB1980} If $\Delta$ is vertex decomposable, then $\Delta$ is shellable.
    \item \cite[Theorem 5.3.18]{VBook} If $\Delta$ is shellable, then $\Delta$ is Cohen-Macaulay (over
    any field $\mathbb{K}$).
    \item  \cite[Theorem 5.4.8]{VBook}
    If $\Delta$ is Cohen-Macaulay, then $h(\Delta)$
    has no negative entries.
\end{enumerate}
\end{theorem}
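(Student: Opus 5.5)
The statement just given is Theorem~\ref{thm:mainproperties}, which collects three standard facts. All three are cited from the literature, so the plan below only sketches the standard arguments.

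For (1), I would argue by induction on the number of vertices. The claim to prove is a stronger one: if $\Delta$ is vertex decomposable with shedding vertex $i$, then a shelling of $\Delta$ is obtained by listing the facets of ${\rm del}_\Delta(i)$ in a shelling order, followed by the sets $F\cup\{i\}$ with $F$ running through a shelling order of ${\rm link}_\Delta(i)$. The point that needs care is purity. In the Provan--Billera setting, vertex decomposability of the pure complex $\Delta$ forces ${\rm del}_\Delta(i)$ to be pure of the same dimension $d$. Hence no facet of the link is a facet of the deletion, and the facets of $\Delta$ are exactly the facets of ${\rm del}_\Delta(i)$ together with the sets $F\cup\{i\}$, $F$ a facet of ${\rm link}_\Delta(i)$.

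Next, check the shelling condition for a later facet $F_i = F\cup\{i\}$ against an earlier facet $F_j$.
\begin{itemize}
\item If $F_j$ also contains $i$, the condition is inherited from the shelling of the link.
\item If $F_j$ lies in the deletion, pick $a=i\in F_i\setminus F_j$. The facet $F$ of the link is a $(d-1)$-face of ${\rm del}_\Delta(i)$, so it lies in some facet $G$ of the deletion, of size $d+1$. Then $F_i\setminus G=\{i\}$, and $G$ appears earlier in the order, as required.
\end{itemize}
The main obstacle is precisely the purity of the deletion. If the definition does not supply it, one must extract it from the hypothesis that $\Delta$ is pure and the deletion is vertex decomposable.

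For (2), I would use Reisner's criterion. Every link of a shellable complex is shellable: restrict the shelling order to the facets containing the face. So it suffices to show that a shellable $d$-dimensional complex has $\tilde H_k(\Delta;\mathbb{K})=0$ for $k<d$. For this, add the facets one at a time. Each new facet $F_i$ meets the complex built so far in a pure $(d-1)$-dimensional subcomplex of $\partial F_i$, generated by the sets $F_i\setminus\{a\}$ for $a$ in the restriction set of $F_i$. This intersection is either a cone, hence contractible, or all of $\partial F_i$, a $(d-1)$-sphere. A Mayer--Vietoris argument then shows that $\Delta$ is homotopy equivalent to a wedge of $d$-spheres, which gives the vanishing for every field $\mathbb{K}$.

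For (3), I would take a linear system of parameters $\theta_1,\ldots,\theta_{d+1}$ for $R/I_\Delta$; one exists after an infinite field extension, which changes neither Cohen-Macaulayness nor $h(\Delta)$. By Cohen-Macaulayness this is a regular sequence, so the Hilbert series of $R/I_\Delta$ is $(1-t)^{-(d+1)}$ times the Hilbert series of the Artinian quotient $R/(I_\Delta+\langle\theta\rangle)$. On the other hand, the Stanley-Reisner Hilbert series computed from the faces equals $\sum_\ell h_\ell t^\ell/(1-t)^{d+1}$. Comparing the two, $h_\ell$ is the dimension of the degree-$\ell$ piece of the Artinian quotient, and so $h_\ell\ge 0$.
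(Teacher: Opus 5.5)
The paper gives no proof of this theorem; all three parts are quoted from Provan--Billera and Villarreal's book. Your sketches are the standard arguments behind those citations, and parts (2) and (3) are sound as written. In (2), Mayer--Vietoris gives exactly the vanishing of $\tilde H_k$ for $k<d$ that Reisner's criterion needs. The wedge-of-spheres homotopy type needs more than Mayer--Vietoris, but you never use it.

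The point you leave open in (1) is a genuine gap relative to the paper's definition, and it cannot be closed the way you suggest. The paper only requires ${\rm del}_\Delta(i)$ to be vertex decomposable (hence pure), not $d$-dimensional. A facet of ${\rm del}_\Delta(i)$ is one of two kinds:
\begin{itemize}
\item a facet of $\Delta$ avoiding $i$, of size $d+1$;
\item a set $F\setminus\{i\}$ with $F$ a facet containing $i$, of size $d$.
\end{itemize}
So purity of the deletion forces dimension $d$ exactly when some facet of $\Delta$ avoids $i$.

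If $i$ lies in every facet, then ${\rm del}_\Delta(i)={\rm link}_\Delta(i)$ is pure of dimension $d-1$. Your list then begins with sets that are not facets of $\Delta$, and the $(d+1)$-set $G\supsetneq F$ in your second bullet does not exist. For example, $\Delta=\langle\{1,2\},\{1,3\}\rangle$ is vertex decomposable in the paper's sense with $i=1$, since both the deletion and the link equal $\langle\{2\},\{3\}\rangle$. Your ordering would be $\{2\},\{3\},\{1,2\},\{1,3\}$.

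The fix is a case split. In the cone case, $\Delta$ is the cone over ${\rm link}_\Delta(i)$ with apex $i$. Its facets are exactly the sets $F\cup\{i\}$ with $F$ a facet of the link. Adjoining $i$ to every facet in a shelling of the link (available by induction) gives a shelling of $\Delta$, because $(F\cup\{i\})\setminus(F'\cup\{i\})=F\setminus F'$. In every other case the deletion is pure of dimension $d$, and your argument goes through verbatim. Under Provan--Billera's own definition the deletion is required to be $d$-dimensional, so the issue does not arise there.
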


Moving forward, we focus primarily on the independence complex of a graph.
Given a finite simple graph $G = (V,E)$ with vertex set $V = \{1,\ldots,n\}$
and edge set $E$, we say $W \subseteq V$ is an {\it independent set} if
for every edge $e \in E$, $e \not\subseteq W$.  The complement 
of an independent set is a {\it vertex cover}.  The {\it 
independence complex} of $G$ is
$${\rm Ind}(G) = \{W \subseteq V ~|~ \mbox{$W$ is an independent set}\}.$$
Then ${\rm Ind}(G)$ is a simplicial complex on the vertex set $V = \{1,\ldots,n\}$.  The complex ${\rm Ind}(G)$ is pure  if and only if 
the graph $G$ is {\it well-covered}, that is, every minimal vertex
cover has the same size. The Stanley-Reisner ideal of ${\rm Ind}(G)$ is the 
{\it edge ideal} of $G$, that is, $I(G) = \langle x_ix_j ~|~ \{i,j\} \in E \rangle$ in $R = \mathbb{K}[x_1,\ldots,x_n]$.  


\section{The Examples}\label{sec:examples}

Table \ref{tab:edges} lists the eight 
connected graphs $G_1, G_2,H_1,\ldots,H_6$ on $11$ vertices
found in our exhaustive search.  These 
eight graphs are the minimal examples listed in Theorem \ref{thm:maintheorem} that demonstrate that the 
reverse implications of \eqref{eq:implications}
fail to hold in general.  Graphs $G_1$ and $G_2$ are 
drawn in Figure \ref{fig:CMnotShellable} and graphs
$H_1,\ldots,H_6$ all appear in Figure \ref{fig:ShellablenotVD}.  
We have also included the graph6 representation of each
graph below Table \ref{tab:edges}.

\begin{table}[ht]
\centering
\small
\setlength{\tabcolsep}{4pt}
\begin{tabular}{@{}c*{8}{l}@{}}
\toprule
 & \multicolumn{2}{c}{CM, not shellable} & \multicolumn{6}{c}{Shellable, not VD} \\
\cmidrule(lr){2-3}\cmidrule(l){4-9}
$v$ & $G_1$ & $G_2$ & $H_1$ & $H_2$ & $H_3$ & $H_4$ & $H_5$ & $H_6$ \\
\midrule
 1 & 4,5,7,8 & 4,5,8,9 & 4,6,7,9 & 4,6,7,9 & 4,6,8,9 & 4,6,8,9 & 4,6,8,9 & 4,6,9,10 \\
 2 & 5,6,8,10,11 & 5,6,8,10,11 & 5,8,9,10 & 5,8,9,10 & 5,7,9,10,11 & 5,7,9,10 & 5,7,9,10,11 & 5,7,8,9 \\
 3 & 6,7,9,10 & 6,7,9,10 & 6,7,9,10,11 & 6,7,10,11 & 6,7,10,11 & 6,7,10,11 & 6,7,10,11 & 6,7,8,11 \\
 4 & 8,9,10,11 & 7,8,11 & 7,8,10,11 & 7,8,9 & 7,8,9 & 7,8,9 & 7,8,9 & 8,9,10,11 \\
 5 & 7,9,11 & 9,10,11 & 8,10,11 & 8,10,11 & 8,9,10 & 8,9,10,11 & 8,9,10 & 7,9,10,11 \\
 6 & 8,9 & 8,9,11 & 9,11 & 9,10,11 & 10,11 & 10,11 & 10,11 & 9,10,11 \\
 7 & 10 & 10,11 & 10 & 10,11 & 8,11 & 8,11 & 8,11 & 10 \\
 8 & -- & -- & 11 & 11 & 11 & 11 & -- & 11 \\
 9 & 11 & 11 & -- & -- & -- & -- & 11 & -- \\
10 & 11 & -- & -- & -- & -- & -- & -- & -- \\
\midrule
$|E|$ & 25 & 25 & 24 & 24 & 24 & 24 & 24 & 25 \\
\bottomrule
\end{tabular}
 
\medskip
\begin{tabular}{@{}l l@{\qquad}l l@{\qquad}l l@{\qquad}l l@{}}
\multicolumn{8}{@{}l}{\emph{graph6 codes}}\\[2pt]
$G_1$ & \verb+JCpdUg{[ap_+ & $G_2$ & \verb+JCp`eikYa|?+ & $H_1$ & \verb+JCQdarc]`y?+ & $H_2$ & \verb+JCQdarSZ`]?+ \\
$H_3$ & \verb+JCQbcvWZBM?+ & $H_4$ & \verb+JCQbcvWZ@]?+ & $H_5$ & \verb+JCQbcvWZBL?+ & $H_6$ & \verb+JCQbRb[f`y?+ \\
\end{tabular}
\caption{The eight graphs of Theorem~\ref{thm:maintheorem}. The entry in row $v$ lists the neighbours $w > v$ of vertex $v$, so each edge
$\{v,w\}$ with $v<w$ appears exactly once. Below the table, each graph is also given in graph6 format, with vertex $i$ of the table corresponding to vertex $i-1$ in the graph6 encoding.}\label{tab:edges}
\end{table}

\begin{example}[Cohen-Macaulay but
not shellable]\label{ex:cmbutnotshellable}
Our exhaustive search found 3,247,490 graphs 
on $11$ vertices where ${\rm Ind}(G)$ is Cohen-Macaulay
over $\mathbb{Q}$.  Because all but two of these graphs
are also shellable, 3,247,488 of these graphs
have the property that ${\rm Ind}(G)$ is Cohen-Macaulay over
all fields $\mathbb{K}$.  
The two graphs where ${\rm Ind}(G)$ is Cohen-Macaulay 
but not shellable are the graphs $G_1$ and $G_2$
as given in Table \ref{tab:edges}.
In this case,  ${\rm Ind}(G_i)$ for $i=1,2$ is Cohen-Macaulay when ${\rm char}(\mathbb{K}) \neq 2$, but not shellable. 
The $f$-vector of ${\rm Ind}(G_1)$ and 
${\rm Ind}(G_2)$ is $(1,11,30,20)$.
\end{example}

\begin{figure}
\includegraphics[width=0.35\textwidth]{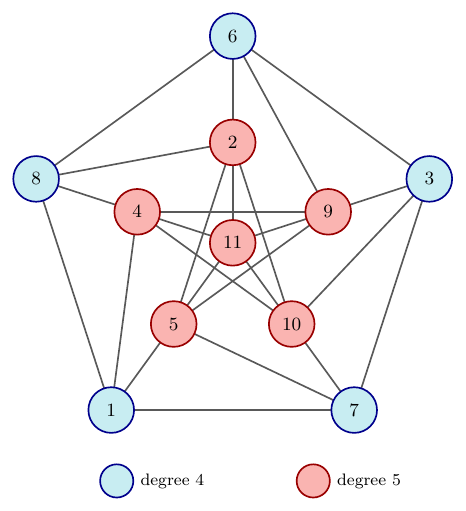}
\includegraphics[width=0.35\textwidth]{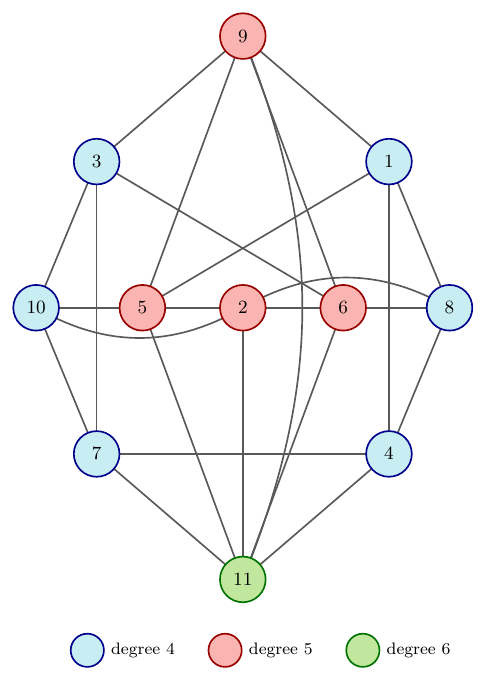}
\caption{The two minimal graphs where ${\rm Ind}(G)$
is Cohen-Macaulay but not shellable. The graph
$G_1$ is on the left and the graph $G_2$ is on the right.}\label{fig:CMnotShellable}
\end{figure}

\begin{proof}[Proof of Theorem \ref{thm:maintheorem} $(2),(3)$]
Recall that the reduced simplicial homology 
of ${\rm Ind}(G)$ only depends
upon the characteristic of the field.   Furthermore,
by the universal coefficient theorem and Reisner's criterion,
if ${\rm Ind}(G)$ is Cohen-Macaulay over some field
$\mathbb{K}$, then ${\rm Ind}(G)$ is Cohen-Macaulay over
$\mathbb{Q}$.  Consequently, if $G$ is a graph on $11$
vertices where ${\rm Ind}(G)$ is 
Cohen-Macaulay over a field $\mathbb{K}$, then 
the graph $G$ is among
the 3,247,490 graphs found in our computer search
which was done over $\mathbb{Q}$.

The independence complexes ${\rm Ind}(G_1)$ and ${\rm Ind}(G_2)$ 
both have the $f$-vector $(1,11,30,20)$, so
$\dim({\rm Ind}(G_i))=2$ for $i=1,2$, and hence
$\dim(R/I(G_i))= 3$ for both graphs.   Graph $G_1$ appears as
the graph $G_4$ in Katzman's paper \cite{K2006}, 
while $G_2$ appears as $G_3$
in the same paper.  Katzman showed that if 
${\rm char}(\mathbb{K}) \neq 2$,
then the projective dimension of $R/I(G_i)$ is 8, and is 9 if 
${\rm char}(\mathbb{K}) = 2$ for $i=1,2$. 
By the Auslander-Buchsbaum Formula,
the depth of $R/I(G_i)$ is then 
$11-8=3$ if  ${\rm char}(\mathbb{K}) \neq 2$, and is $2$ otherwise.  So ${\rm Ind}(G_i)$ is
Cohen-Macaulay if and only if ${\rm char}(\mathbb{K}) \neq 2$.  

On the other hand, ${\rm Ind}(G_i)$ is not shellable for $i=1,2$.
Indeed, if ${\rm Ind}(G_i)$ was shellable for either $i=1,2$, then it would be Cohen-Macaulay in all characteristics by Theorem \ref{thm:mainproperties}. But we just showed this is not the case.

 If $G$ is any other graph on 11 vertices
 with $G \not\in \{G_1,G_2\}$ and ${\rm Ind}(G)$ Cohen-Macaulay
 over some $\mathbb{K}$, and thus over $\mathbb{Q}$, 
 then ${\rm Ind}(G)$ is also shellable.  Indeed,
 if ${\rm Ind}(G)$ is not shellable, then $G,G_1,G_2$ would
 be three graphs whose independence complexes are
 Cohen-Macaulay over
 $\mathbb{Q}$ but not shellable.  But our search only found two
 such graphs.  So $G$ must be among the 3,247,488
 graphs whose independence complexes are shellable.  Thus
 if ${\rm char}(\mathbb{K}) \neq 2$, $G_1$ and $G_2$
 are the only graphs whose independence complexes are Cohen-Macaulay
 but not shellable. However, if ${\rm char}(\mathbb{K}) =2$,
an independence complex of a graph on 11 vertices
is Cohen-Macaulay if and only if it is shellable.
\end{proof}

\begin{remark}
    Both $G_1$ and $G_2$ have previously appeared in the literature,
    but it appears it went unnoticed that
    these graphs also provide minimal examples of graphs
    whose independence complexes are Cohen-Macaulay but not
    shellable.  

    As observed in the above proof,
    our graphs $G_1$ and $G_2$ appear as examples $G_4$ and $G_3$ in the appendix of 
    Katzman's paper \cite{K2006}.    
    In his paper, Katzman showed that the edge ideals of 
    $G_3$ and $G_4$ both have graded Betti numbers
    that can change depending upon the characteristic of the
    field $\mathbb{K}$.  
   
    The graph $G_1$ also appears 
    in Jaramillo--Villarreal's paper
    \cite[Example 5.4]{JV2021} on the ${\tt v}$-number of the
    edge ideal; this graph is used to give the first example
    of a graph whose edge ideal has a
    ${\tt v}$-number that is larger than its
    Castelnuovo-Mumford regularity, providing a counterexample
    to a conjecture of N\'u\~nez-Betancourt--Pitones--Villarreal 
    \cite{NPV2018}.  This graph was further exploited
    by Saha--Van Tuyl \cite[Theorem 3.1]{SVT2026} to give a minimal example
    of a graph whose edge ideal has a
    ${\tt v}$-number greater than the
    degree of its $h$-polynomial.

The  independence complexes of $G_1$ and $G_2$ 
are also related to the minimal clique complexes with torsion found by 
Adamaszek \cite[Theorem 1]{A2014}.   More precisely,
Adamaszek considers
${\rm Cl}(G)$, the clique complex of a graph, where the faces
correspond to the cliques of the graph.  It can be checked
that ${\rm Cl}(G) = {\rm Ind}(\overline{G})$, where $\overline{G}$ denotes
the complement of the graph $G$.  His $K_4$ is the complement of our
$G_1$, and his $K_2 = \overline{G}_2$.

As observed by Adamaszek, the complex ${\rm Cl}(\overline{G}_1)$  
is homeomorphic to the two dimensional 
real projective plane $\mathbb{R}P^2$. Within
the combinatorial commutative algebra community, the same example
is sometimes referred to as {\it Terai's example}.  Indeed, 
the complex ${\rm Ind}(G_1)$, which is attributed to Terai, 
appears as Exercise 6.3.65 in \cite{VBook}
as a triangulation of $\mathbb{R}P^2$ (or Exercise 5.3.31 
of the 2001 edition of \cite{VBook}).
\end{remark}

\begin{example}[Shellable but not vertex decomposable]\label{ex:shellablebutnotvd}
Among all the graphs on $11$ vertices
whose independence complexes are 
shellable, the independence complexes
of the six graphs $H_1,\ldots,H_6$ are the only ones that
fail to
be vertex decomposable.  The $f$-vector 
for ${\rm Ind}(H_i)$ is $(1,11,31,21)$ for $i=1,\ldots,5$,
while the $f$-vector of ${\rm Ind}(H_6)$ is $(1,11,30,20)$.  
The graph $H_6$ is also the only example on $25$ edges,
while the other five graphs have $24$ edges.
\end{example}

\begin{figure}
\centering
\subfloat[Graph $H_1$]{
  \includegraphics[width=50mm]{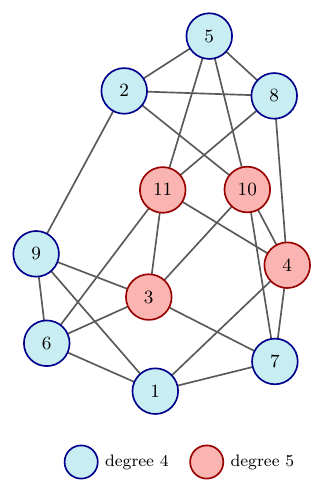}
}
\subfloat[Graph $H_2$]{
  \includegraphics[width=50mm]{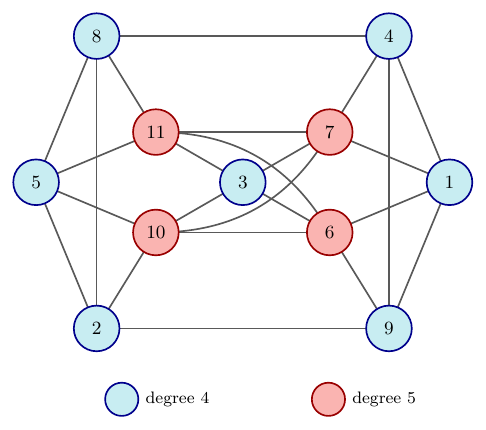}
}
\hspace{0mm}
\subfloat[Graph $H_3$]{
  \includegraphics[width=50mm]{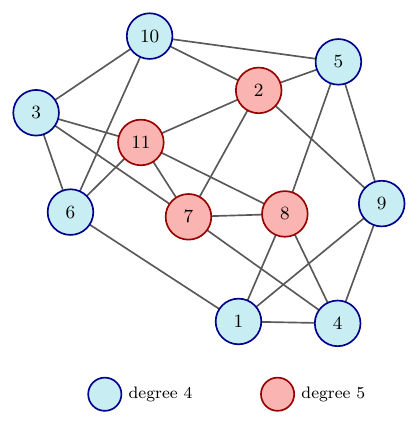}
}
\subfloat[Graph $H_4$]{
  \includegraphics[width=50mm]{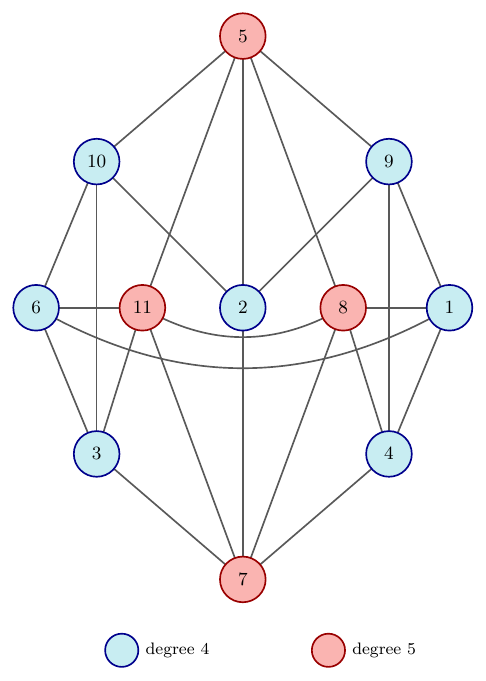}
}
\hspace{0mm}
\subfloat[Graph $H_5$]{   
  \includegraphics[width=50mm]{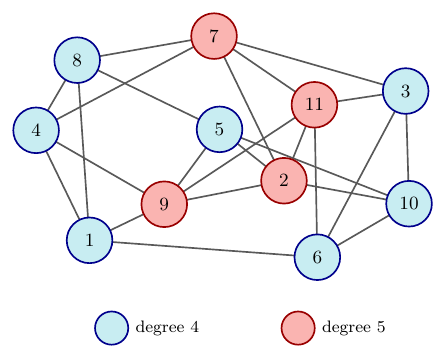}
}
\subfloat[Graph $H_6$]{
  \includegraphics[width=50mm]{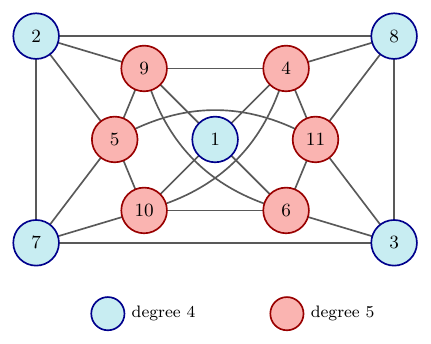}
}
\caption{The six graphs $G$ on $11$ vertices
with ${\rm Ind}(G)$ shellable but not vertex 
decomposable.}\label{fig:ShellablenotVD}
\end{figure}

\begin{proof}[Proof of Theorem \ref{thm:maintheorem} $(1)$]
For each of the six graphs, one can use the {\tt shellingOrder}
command 
in the {\tt SimplicialDecomposability} package \cite{simpdecomp} of {\it Macaulay2} to find a 
shelling order.  As a concrete example, consider the graph $H_1$.
The independence complex ${\rm Ind}(H_1)$ has 
shelling order 
\begin{align*}
&\{9,10,11\},\ \{1,10,11\},\ \{7,9,11\},\ \{2,7,11\},\ \{1,2,11\},\ \{8,9,10\},\ \{6,8,10\},\\
&\{1,8,10\},\ \{7,8,9\},\ \{5,7,9\},\ \{4,5,9\},\ \{6,7,8\},\ \{1,3,8\},\ \{5,6,7\},\ \{2,6,7\},\\
&\{4,5,6\},\ \{2,4,6\},\ \{1,2,3\},\ \{2,3,4\},\ \{3,4,5\},\ \{1,3,5\}.
\end{align*}
On the other hand, if  ${\rm Ind}(H_1)$ was vertex decomposable, 
then there would be  a vertex
$i \in V$ such that ${\rm del}_{{\rm Ind}(H_1)}(i)$ is
vertex decomposable, and thus Cohen-Macaulay by Theorem \ref{thm:mainproperties}.  However, for $i = 1,3, 5$, or $8$, ${\rm del}_{{\rm Ind}(H_1)}(i)$ is not
pure, and thus cannot be Cohen-Macaulay.
For the vertices  $2, 6$ and $9$, the
$h$-vector of  ${\rm del}_{{\rm Ind}(H_1)}(i)$ is $(1,7,8,-1)$,
for vertices $4, 10, 11$, the $h$-vector of 
the deletion is $(1,7,9,-1)$, and
for $i=7$ the $h$-vector is $(1,7,8,-2)$.  So by Theorem \ref{thm:mainproperties} (3), this means none of the deletions are
Cohen-Macaulay, and
hence ${\rm Ind}(H_1)$ is not vertex decomposable.  

Similar calculations show that $H_2,\ldots,H_6$ are shellable but
not vertex decomposable.
\end{proof}

We can use the above examples to construct a graph on
$n \geq 11$ vertices with similar properties.  We
recall the notion of a graph expansion.  For
a graph $G=(V,E)$ on $n$ vertices, fix a tuple ${\bf s}=(s_1,\ldots,s_n) \in
\mathbb{N}^n_{> 0 }$ of positive integers.  The 
{\it ${\bf s}$-expansion of $G$}, denoted $G^{\bf s}$, is the 
graph on the vertex set 
$\{(i,j) ~|~ 1 \leq i \leq n, ~ 1 \leq j \leq s_i\}$
with edge set $$\{\{(i_1,j_1),(i_2,j_2)\} ~|~ \{i_1,i_2\} \in E ~\mbox{or}~ i_1=i_2\}.$$
That is, $G^{{\bf s}}$ is formed by replacing the vertex $i$ with a clique of
size $s_i$, and if $\{i,j\}$ is an edge in the original graph, we join
all vertices in the two corresponding cliques. 

\begin{theorem}\label{thm:expansion}
Let $G$ be a graph and ${\bf s} \in \mathbb{N}^n_{>0}$. 
\begin{enumerate}
    \item \cite[Theorem 2.7]{MKA2016} 
    ${\rm Ind}(G)$ is vertex decomposable if and only if 
    ${\rm Ind}(G^{\bf s})$ is 
    vertex decomposable.
    \item \cite[Theorem 2.12]{MKA2016} If ${\rm Ind}(G)$ is shellable, 
    then ${\rm Ind}(G^{\bf s})$ is shellable.
    \item \cite[Corollary 6.4]{BWW2005} \cite[Theorem 2.4]{RAM2016}
    ${\rm Ind}(G)$ is Cohen-Macaulay over $\mathbb{K}$ if and only if 
    ${\rm Ind}(G^{\bf s})$ is Cohen-Macaulay over $\mathbb{K}$.
\end{enumerate}
\end{theorem}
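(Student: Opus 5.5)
All three parts rest on one description of ${\rm Ind}(G^{\bf s})$, and I would prove each by induction on the number of vertices $\sum_i s_i$ of $G^{\bf s}$. A set of vertices of $G^{\bf s}$ is independent exactly when it is obtained from an independent set $W$ of $G$ by choosing one vertex $(i,j)$ for each $i\in W$. Two consequences follow for a vertex $(i,j)$ of $G^{\bf s}$.
\begin{enumerate}
\item The deletion ${\rm del}(i,j)$ is ${\rm Ind}(G^{{\bf s}-{\bf e}_i})$ if $s_i>1$, and ${\rm Ind}((G\setminus i)^{\bf s})$ if $s_i=1$.
\item The link ${\rm link}(i,j)$ is ${\rm Ind}((G\setminus N[i])^{\bf s})$, and this subcomplex does not depend on $j$.
\end{enumerate}
Facets of ${\rm Ind}(G^{\bf s})$ correspond to pairs (facet of ${\rm Ind}(G)$, choice function), so purity of ${\rm Ind}(G^{\bf s})$ and of ${\rm Ind}(G)$ are equivalent. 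I will use this freely.

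For (1), first suppose ${\rm Ind}(G)$ is vertex decomposable with decomposing vertex $i$. If $s_i>1$, decompose ${\rm Ind}(G^{\bf s})$ at $(i,s_i)$. The deletion is vertex decomposable by induction. The link is the expansion of ${\rm Ind}(G\setminus N[i])={\rm link}_{{\rm Ind}(G)}(i)$, which is vertex decomposable by hypothesis, so the link is vertex decomposable by induction. If $s_i=1$, the same argument works, with the deletion being the expansion of ${\rm del}_{{\rm Ind}(G)}(i)$.

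Conversely, suppose ${\rm Ind}(G^{\bf s})$ is vertex decomposable with decomposing vertex $(i,j)$. If $s_i>1$, the deletion ${\rm Ind}(G^{{\bf s}-{\bf e}_i})$ is vertex decomposable, and induction gives the result. If $s_i=1$, then ${\rm Ind}((G\setminus i)^{\bf s})$ and ${\rm Ind}((G\setminus N[i])^{\bf s})$ are vertex decomposable. By induction ${\rm Ind}(G\setminus i)$ and ${\rm Ind}(G\setminus N[i])$ are vertex decomposable, and these are the deletion and link of $i$ in ${\rm Ind}(G)$. The base case is all $s_i=1$, which is trivial, and the simplex case is immediate.

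For (2), write each facet of ${\rm Ind}(G^{\bf s})$ as $(F,c)$, where $F$ is a facet of ${\rm Ind}(G)$ and $c$ is a choice function on $F$. Order these primarily by a shelling order of ${\rm Ind}(G)$ and secondarily by, say, the lexicographic order on $c$. Take two facets $(F,c)\prec(F',c')$.
\begin{itemize}
\item If $F=F'$, then $c'$ exceeds $c$ at some $i$. Lowering $c'(i)$ to $1$ gives an earlier facet differing from $(F',c')$ only in the vertex $(i,c'(i))$.
\item If $F\neq F'$, shellability of ${\rm Ind}(G)$ gives an earlier $F_k$ with $F'\setminus F_k=\{a\}$ and $a\notin F$. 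Pair $F_k$ with the choice function agreeing with $c'$ on $F'\cap F_k$ and equal to $1$ elsewhere. This is an earlier facet, and it differs from $(F',c')$ only in $(a,c'(a))$, which lies outside $(F,c)$.
\end{itemize}
Alternatively, (2) follows from a shedding-style induction like that in (1), using that being shellable is preserved when a shedding vertex is added.

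For (3), I would use Reisner's criterion. Every link of a face of ${\rm Ind}(G^{\bf s})$ is ${\rm Ind}(H^{\bf t})$ for an induced subgraph $H$ of $G$, and conversely every link in ${\rm Ind}(G)$ arises this way. So it suffices to prove, by induction, that ${\rm Ind}(G^{{\bf s}+{\bf e}_i})$ has reduced homology only in top degree exactly when ${\rm Ind}(G^{\bf s})$ does, assuming purity and the statement for links. Let $L={\rm Ind}((G\setminus N[i])^{\bf s})$. The new complex is ${\rm Ind}(G^{\bf s})\cup {\rm cone}(L)$, and $L$ is already contained in the cone over $(i,1)$ in ${\rm Ind}(G^{\bf s})$, hence contractible in it. So
\[
{\rm Ind}(G^{{\bf s}+{\bf e}_i})\simeq {\rm Ind}(G^{\bf s})\vee \Sigma L,
\]
and
\[
\tilde H_k({\rm Ind}(G^{{\bf s}+{\bf e}_i}))\cong \tilde H_k({\rm Ind}(G^{\bf s}))\oplus \tilde H_{k-1}(L).
\]
By purity, $\dim L=d-1$. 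Hence the top-degree-only condition for the expansion is equivalent to it holding for both ${\rm Ind}(G^{\bf s})$ and $L$, and $L$ is the link of $(i,1)$.

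The main obstacle is (3). One must organize the induction so that the criterion for all links of all faces, over the fixed field $\mathbb{K}$, transfers in both directions. The key point to verify carefully is the homotopy wedge decomposition together with the dimension count for $L$. The converse direction needs the direct-sum splitting to force vanishing in both summands.
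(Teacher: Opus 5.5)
Your proposal is correct, but it does not follow the paper's route, because the paper gives no proof here. It imports the three statements from \cite{MKA2016}, \cite{BWW2005} and \cite{RAM2016}, where expansions (inflations) are treated for arbitrary simplicial complexes defined through their facets. You instead give a self-contained argument built on two identities: ${\rm del}(i,j)$ is ${\rm Ind}(G^{{\bf s}-{\bf e}_i})$ or ${\rm Ind}((G\setminus i)^{\bf s})$, and ${\rm link}(i,j)={\rm Ind}((G\setminus N[i])^{\bf s})$, independent of $j$. The three parts all check out. In (1), the shedding-vertex induction works. In (2), the explicit shelling works; purity forces $F_k=(F'\setminus\{a\})\cup\{b\}$, so your choice function on $F_k$ is well defined. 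In (3), the gluing ${\rm Ind}(G^{{\bf s}+{\bf e}_i})\simeq{\rm Ind}(G^{\bf s})\vee\Sigma L$ is valid, since $L$ is coned off by $(i,1)$ inside ${\rm Ind}(G^{\bf s})$. The induction in (3) closes as you indicate if it runs on $|V(G^{\bf s})|$ with the full biconditional as hypothesis. Nonempty faces $(W,c)$ are then handled through ${\rm Ind}(G\setminus N[W])$, and the empty face through your splitting together with the fact that $L$ is the link of $(i,1)$. Your route buys elementarity: you need only Reisner's criterion and a one-vertex coning lemma. The same deletion and link identities also hold for inflations of any complex, so nothing really depends on flagness. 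The cited route buys more information: Bj\"orner--Wachs--Welker obtain, via poset fiber theorems, a global homotopy formula expressing the inflation as a wedge of suspensions of links of the original complex, which your one-step wedge recovers only after iteration. One small presentational point concerns the forward direction of (1). An edgeless $G$ gives a simplex ${\rm Ind}(G)$, but ${\rm Ind}(G^{\bf s})$ is then a join of discrete sets, not a simplex, so that case is not literally immediate. Your inductive step with any $i$ having $s_i>1$ covers it. That step is in fact needed under the Provan--Billera convention, where the deletion must keep full dimension and so a simplex has no shedding vertex.
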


\begin{remark}
The authors of \cite{MKA2016,RAM2016} study the expansion
of an arbitrary simplicial complex $\Delta$, which is defined in terms
of the facets of $\Delta$.  This definition of an expansion is 
equivalent to the inflation of a simplicial complex found 
in the paper \cite{BWW2005}.  
We have specialized these definitions for ${\rm Ind}(G)$ so that the
expansion/inflation can also be described in terms of the graph $G^{\bf s}$.
Note that \cite{MKA2016} conjectured that Theorem \ref{thm:expansion} (3)
was true, but this result was already known \cite{BWW2005}.
\end{remark}

\begin{theorem}
    For all integers $n \geq 11$
    there exists a graph $G$ on $n$ vertices  such
        that ${\rm Ind}(G)$ is Cohen-Macaulay but not shellable,
        respectively, shellable but not vertex decomposable.
\end{theorem}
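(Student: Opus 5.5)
The plan is to start from the minimal examples of Theorem \ref{thm:maintheorem} and enlarge them with the expansion construction of Theorem \ref{thm:expansion}. Fix $n \geq 11$ and set $m = n-11$. Let ${\bf s} = (1+m,1,\ldots,1) \in \mathbb{N}^{11}_{>0}$. For any graph $G$ on $11$ vertices, $G^{\bf s}$ then has exactly $11 + m = n$ vertices: vertex $1$ is replaced by a clique of size $m+1$ and every other vertex is kept. When $n=11$ we have ${\bf s}=(1,\ldots,1)$, so $G^{\bf s}=G$ and the statement is just Theorem \ref{thm:maintheorem} itself.

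For the \emph{shellable but not vertex decomposable} case, take $G = H_1$.
\begin{enumerate}
    \item By Theorem \ref{thm:maintheorem} (1), ${\rm Ind}(H_1)$ is shellable, so Theorem \ref{thm:expansion} (2) shows that ${\rm Ind}(H_1^{\bf s})$ is shellable.
    \item Since ${\rm Ind}(H_1)$ is not vertex decomposable, the equivalence in Theorem \ref{thm:expansion} (1) shows that ${\rm Ind}(H_1^{\bf s})$ is not vertex decomposable.
\end{enumerate}
So $H_1^{\bf s}$ is a graph on $n$ vertices with the required property.

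For the \emph{Cohen-Macaulay but not shellable} case, take $G = G_1$ and fix a field $\mathbb{K}$ with ${\rm char}(\mathbb{K}) \neq 2$, for instance $\mathbb{Q}$.
\begin{enumerate}
    \item By Theorem \ref{thm:maintheorem} (2), ${\rm Ind}(G_1)$ is Cohen-Macaulay over $\mathbb{K}$, so Theorem \ref{thm:expansion} (3) shows that ${\rm Ind}(G_1^{\bf s})$ is Cohen-Macaulay over $\mathbb{K}$.
    \item Theorem \ref{thm:expansion} has no converse to part (2), so non-shellability does not transfer directly. Instead we use characteristic dependence. By the proof of Theorem \ref{thm:maintheorem} (2),(3), ${\rm Ind}(G_1)$ is not Cohen-Macaulay over a field of characteristic $2$. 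The equivalence in Theorem \ref{thm:expansion} (3), applied over that field, shows that ${\rm Ind}(G_1^{\bf s})$ is also not Cohen-Macaulay in characteristic $2$.
    \item If ${\rm Ind}(G_1^{\bf s})$ were shellable, Theorem \ref{thm:mainproperties} (2) would make it Cohen-Macaulay over every field, contradicting the previous step. Hence ${\rm Ind}(G_1^{\bf s})$ is Cohen-Macaulay over $\mathbb{K}$ but not shellable.
\end{enumerate}

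The main obstacle is this non-shellability step, since shellability is only known to pass upward under expansion. The characteristic-$2$ argument above is the route I would take. The only other points to check are bookkeeping: that the vertex count of $G^{\bf s}$ is $\sum_i s_i = n$, and that purity and well-coveredness are implicitly preserved, which is automatic because each property under discussion presupposes purity.
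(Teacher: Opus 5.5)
Your proposal is correct and follows the paper's proof step for step. The paper also takes ${\bf s}=(n-10,1,\ldots,1)$, which is your $1+m$, and expands $H_1$ and $G_1$. It transfers shellability and non-vertex-decomposability from $H_1$ via Theorem \ref{thm:expansion} (1),(2), and rules out shellability of ${\rm Ind}(G_1^{\bf s})$ with the same characteristic-$2$ argument, combining Theorem \ref{thm:expansion} (3) with Theorem \ref{thm:mainproperties} (2).
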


\begin{proof}
    For a fixed integer $n \geq 11$, let ${\bf s} = (n-10,1,\dots,1) \in \mathbb{N}^{11}_{>0}$.  Let $G_1$ be the graph of Example \ref{ex:cmbutnotshellable} and
    $H_1$ be the graph of Example \ref{ex:shellablebutnotvd}.  So,
    both $G_1^{\bf s}$ and $H_1^{\bf s}$ are graphs on $n$ vertices.
    By Theorems \ref{thm:maintheorem} and \ref{thm:expansion}, 
    ${\rm Ind}(G_1^{\bf s})$ is Cohen-Macaulay over $\mathbb{K}$ if and only if
    ${\rm Ind}(G_1)$ is Cohen-Macaulay over $\mathbb{K}$ if and only if 
    ${\rm char}(\mathbb{K}) \neq 2$.  Consequently, 
    ${\rm Ind}(G_1^{\bf s})$ is not shellable, because if it was, 
    the complex would be Cohen-Macaulay over all fields.   

    Similarly, by Theorem \ref{thm:expansion}, ${\rm Ind}(H_1^{\bf s})$ is
    shellable, but not vertex decomposable.
\end{proof}

\section{Overview of the computer search}\label{sec:computations} 

Table \ref{table} summarizes our census of the connected graphs
whose independence complexes belong to each of the following families: 
well-covered (pure), Cohen-Macaulay, shellable, and vertex decomposable.
Note that the number of connected graphs
and well-covered graphs on $n$ vertices are given by the sequences
A001349 \cite{oeisA001349} and A222625
\cite{oeisA222625}, respectively.  The first ten rows of the table 
were already computed by Baker--Vander Meulen--Van Tuyl
\cite{BVMVT2018}.  Our contribution is the eleventh row.

\begin{table}[ht]
\centering
\begin{tabular}{@{}rrrrrr@{}}
\toprule
    & Connected & Well-     & Cohen-Macaulay      &           & Vertex        \\
$n$ & graphs    & covered   & (over $\mathbb{Q}$) & Shellable & decomposable  \\
\midrule
 1 &             1 &         1 &         1 &         1 &         1 \\
 2 &             1 &         1 &         1 &         1 &         1 \\
 3 &             2 &         1 &         1 &         1 &         1 \\
 4 &             6 &         3 &         2 &         2 &         2 \\
 5 &            21 &         6 &         5 &         5 &         5 \\
 6 &           112 &        27 &        20 &        20 &        20 \\
 7 &           853 &       108 &        82 &        82 &        82 \\
 8 &        11,117 &       788 &       565 &       565 &       565 \\
 9 &       261,080 &     9,035 &     5,688 &     5,688 &     5,688 \\
10 &    11,716,571 &   196,928 &   102,039 &   102,039 &   102,039 \\
\midrule
11 & 1,006,700,565 & 7,797,877 & 3,247,490 & 3,247,488 & 3,247,482 \\
\bottomrule
\end{tabular}
\caption{Number of connected graphs $G$ on $n$ vertices that are
well-covered (i.e., ${\rm Ind}(G)$ is pure), and for which ${\rm Ind}(G)$
is Cohen-Macaulay over $\mathbb{Q}$, shellable, and vertex decomposable.}\label{table}
\end{table}

To ensure the accuracy of our results, we did our census over all
connected graphs
on $11$ vertices twice. The first time, we used code 
written in {\it Macaulay2} \cite{M2} by the author, and the second
time we used code written in Python with the help of Claude Opus 5.5.  For the
second run, we also computed the number of well-covered graphs in
order to check our computations against the expected number given in
A222625.  In both runs, we checked whether a graph was Cohen-Macaulay over 
the field $\mathbb{Q}$.

The first computer search was carried out between July 2022 and June 2023 while
the author was on sabbatical.  The {\it Macaulay2} code 
made use of the packages
{\tt EdgeIdeals} \cite{edge}, {\tt Nauty} \cite{nauty}, and
{\tt SimplicialDecomposability} \cite{simpdecomp}.   Note that the 
{\tt Nauty} package is a {\it Macaulay2} package  based upon the
nauty program of McKay--Piperno \cite{MP2013}.   The 
computations made use of the supercomputer clusters available to 
members of Digital Research Alliance of Canada (DRAC).
The following work-flow was employed.  Using the {\tt Nauty} package, 
we generated all connected graphs on 
$11$ vertices and $e$ edges where $e \in \{10,\ldots,55\}$.  After
further partitioning this set of graphs into files containing approximately
50,000 graphs, we first
checked which of these graphs were Cohen-Macaulay.  Those that 
were Cohen-Macaulay were then checked for vertex decomposability.
A graph that failed to be vertex decomposable was then further 
checked whether it was shellable or not.  
While this search took advantage of some parallelization, setting up 
the relevant computations and keeping track of the output took up a significant
amount of work time.  All the examples of this paper were found during
the first run in 2022--23.

The second search took place in  September 2026, to validate
that all the examples found in the first search formed a  complete list 
of minimal examples.  Claude Opus 5.5 was
prompted to write code in Python (and the nauty program of McKay--Piperno)
to repeat the census on 11 vertices.  Additionally, the AI model was also prompted to take advantage of parallelization
using the facilities of  DRAC.  
The second run took a day to complete after it was recommended to exploit
Theorem \ref{thm:mainproperties} (3) and make use of memoization when
testing vertex decomposability.  The run returned the exact same eight
graphs described in this paper.

\subsection*{Acknowledgments}
The author thanks Naoki Terai for discussing some of 
the origins of the graph $G_1$ of Example \ref{ex:cmbutnotshellable}. 
Part of this work was made possible by the 
facilities of the Shared Hierarchical Academic 
Research Computing Network 
({\tt SHARCNET:www.sharcnet.ca}) and 
Digital Research Alliance of Canada 
({\tt https://alliancecan.ca/en}).
The author's research is supported by NSERC Discovery Grant 2024--05299. 

\subsection*{Availability of Data} 
The list of all connected graphs on 11 vertices whose independence complexes are Cohen-Macaulay over $\mathbb{Q}$ is available at \url{https://vantuyl.github.io/minimal-cm/}.

\subsection*{Declaration of AI Use}  The author made use of Claude Opus 5.5
to help generate the figures, tables, and improvements for the writing.  As mentioned in
Section \ref{sec:computations}, AI was also used to create code to 
cross-check the results of the first run.  


\end{document}